\definecolor{refkey}{rgb}{0,1,1}
\definecolor{labelkey}{rgb}{1,0,0}
\newcommand{\eq} [1] {\begin{equation}\label{#1}\quad}
\newcommand{\en} {\end{equation}}
\newcommand{\R}{\mathbb{R}}
\newcommand{\C}{\mathbb{C}}
\newcommand{\N}{\mathbb{N}}
\newcommand{\CS}{\mathbb{C}S}
\renewcommand{\S}{\mathcal{S}}
\newcommand{\diag}{\operatorname{diag}}
\newcommand{\tr}{\operatorname{tr}}
\newcommand{\re}{\operatorname{Re}}
\newcommand{\im}{\operatorname{Im}}
\newcommand{\conv}{\operatorname{conv}}
\newcommand{\scal}[1]{\langle#1\rangle}
\newcommand{\norm}[1]{\left\Vert#1\right\Vert}
\newcommand{\mathsym}[1]{{}}
\newcommand{\unicode}[1]{{}}
\newcommand{\on}[1]{\operatorname{#1}}
\theoremstyle{plain}
\newtheorem{theorem}{Theorem}[section]
\newtheorem{corollary}[theorem]{Corollary}
\newtheorem{lemma}[theorem]{Lemma}
\newtheorem{proposition}[theorem]{Proposition}
\theoremstyle{definition}
\newtheorem{example}[theorem]{Example}
\theoremstyle{remark}
\newtheorem{remark}[theorem]{Remark}
 \numberwithin{equation}{section}
\newtheorem*{ack}{Acknowledgment}
\begin{document}
\title[Inverse Continuity on the Boundary of the Numerical Range]{Inverse Continuity on the Boundary of the Numerical Range}
\author[T. Leake, B. Lins and I. M. Spitkovsky]{Timothy Leake, Brian Lins$^*$, Ilya M. Spitkovsky}
\date{}
\address{Timothy Leake, Yale University}
\email{timothy.leake@gmail.com}
\address{Brian Lins, Hampden-Sydney College}
\email{blins@hsc.edu}
\thanks{$^*$Corresponding author.}
\address{Ilya Spitkovsky, College of William \& Mary and New York University, Abu Dhabi}
\email{ilya@math.wm.edu, imspitkovsky@gmail.com, ims2@nyu.edu}
\subjclass[2000]{Primary 15A60, 47A12; Secondary 54C08}
\keywords{Field of values; numerical
range; inverse continuity; weak continuity}
\thanks{This work was partially supported by NSF grant DMS-0751964.}
\thanks{The third author was supported in part by the Plumeri Award for Faculty Excellence from the College of William and Mary and by Faculty Research funding from the Division of Science and Mathematics, New York University Abu Dhabi.}

\begin{abstract}
Let $A \in M_n(\C)$.  We consider the mapping $f_A(x)=x^*Ax$, defined on the unit sphere in $\C^n$.  The map has a multi-valued inverse $f_A^{-1}$, and the continuity properties of $f_A^{-1}$ are considered in terms of the structure of the set of pre-images for points in the numerical range. It is shown that there may be only finitely many failures of continuity of $f_A^{-1}$, and conditions for where these failure occur are given. Additionally, we give a necessary and sufficient condition for weak inverse continuity to hold for $n=4$ and a sufficient condition for $n>4$.
\end{abstract}

\maketitle

\section{Introduction}
Let $M_n(\C)$ stand for the algebra of $n\text{-by-} n$ matrices with the entries in the complex field $\C$. For any $A\in M_n(\C)$, the quadratic form $f_A$ is defined on the standard $n$-dimensional vector space $\C^n$ by \eq{fA} f_A(x)= x^*Ax. \en The image of the unit sphere $\C S^n$ of $\C^n$ under $f_A$ is by definition the {\em numerical range} (or the {\em field of values}) of $A$, denoted $F(A)$. The continuity of $f_A$ immediately implies such basic properties of $F(A)$ as its connectedness and compactness. In addition, $F(A)$ is a convex set (the classical Toeplitz-Hausdorff theorem)  and, more specifically, the convex hull of a certain algebraic curve $C_A$ associated with $A$, see \cite{Ki} or its English translation \cite{Ki08}. A complete description of all possible shapes for $F(A)$ is given in \cite{HelSpit}.

In this paper, we are concerned with the continuity properties of the inverse (multivalued) function \[ f_A^{-1}\colon F(A)\longrightarrow \CS^n. \] Following \cite{CJKLS2}, we distinguish between strong and weak continuity of $f_A^{-1}$ at $z\in F(A)$, the former (resp., latter) meaning that, with respect to the relative topology on $F(A)$, the direct mapping $f_A$ is open at all (resp., some) pre-images $x\in f_A^{-1}(z)$. 

It was shown in \cite[Theorem 4]{CJKLS2} that the strong continuity holds everywhere on $F(A)$ except maybe the round points of its boundary $\partial F(A)$. (We follow the terminology of \cite{CJKLS2} according to which $z\in\partial F(A)$ is a \textit{round point} if for any $\epsilon>0$ at least one of its one-sided neighborhoods in $\partial F(A)$ is not a line segment.  {A maximal subset of the boundary that is a nontrivial closed line segment is a \textit{flat portion}}. In what follows we will also distinguish the {\em fully round} boundary points, for which both one-sided neighborhoods of $z$ are not line segments.) If there are no round points, that is, $F(A)$ is a polygon, the strong continuity therefore holds everywhere on $F(A)$ \cite[Corollary 5]{CJKLS2}. This result covers in particular all normal matrices.

Moreover, it is a simple observation (also made in \cite{CJKLS2}) that strong continuity holds whenever $f_A^{-1}(z)$ has rank one. Since this is the case
for all $z\in \partial F(A)$ when $A\in M_2(\C)$ and not normal or  $A\in M_3(\C)$ and unitarily irreducible, the strong continuity holds everywhere on $F(A)$ for all $2\text{-by-} 2$ and unitarily irreducible $3\text{-by-} 3$ matrices $A$ (\cite{CJKLS2}, Corollary~6 and Theorem~10, respectively). 

However, the strong (and even weak) continuity may indeed fail at round points $z\in \partial F(A)$ with $f_A^{-1}(z)$ not lying in a one-dimensional subspace; we will call such points {\em multiply generated}. By way of examples it was established in \cite{CJKLS2} that the following is possible: strong continuity failure for unitarily irreducible $4\text{-by-} 4$ matrices and $3\text{-by-} 3$ unitarily reducible matrices, weak continuity failure for $6\text{-by-} 6$ unitarily irreducible and $4\text{-by-} 4$ unitarily reducible matrices. While weak continuity indeed persists for all $A\in M_3(\C)$ \cite[Theorem 11]{CJKLS2}, it remained unclear what was the smallest size of unitarily irreducible matrices for which weak continuity may fail. Another open question is the possible number of such points.

Prompted by the last question, the multiply generated boundary points were further studied in \cite{LLS12}. It was proved there, among other things, that the sharp bound for the number of such points for unitarily irreducible $A\in M_n(\C)$ is $n-3$ if $n=3,4,5$, while already for $n=6$ all boundary points may be multiply generated.

Here we take the matter further, and establish that intrinsic reasons for inverse continuity failure lie deeper, and even at multiply generated points the continuity may persist. We give necessary and sufficient conditions for weak or strong continuity to fail. As one of the applications, it is shown that weak continuity may indeed fail for unitarily irreducible $A\in M_4(\C)$.  

\section{Inverse Continuity}

{ We start with some facts concerning the algebraic curve $C_A$ the convex hull of which is $F(A)$.}  This curve coincides with critical values of the map \eqref{fA} when the domain { $\CS^n$ and the target space $\C$} are treated as real manifolds \cite{JoswigStraub98}.   Let $\Sigma_A$ denote the set of all critical values of $f_A$, then the curve $C_A \subseteq \Sigma_A$.  In fact, if we include the bitangent set $C_A'$ containing all line segments connecting two points in $C_A$ when the points have the same tangent, then $\Sigma_A = C_A \cup C_A'$ \cite{JoswigStraub98}.  { The following} description of the sets $C_A$ and $\Sigma_A$ {is based on} \cite{Ki} and \cite{JoswigStraub98}; see \cite[Section 5]{GaSe12} for more {details.} 

For any $A \in M_n(\C)$, let $\re A = \frac{1}{2} (A+A^*)$ and $\im A = \frac{1}{2i} (A-A^*)$.  The critical value set $\Sigma_A$ is $\{f_A(x)\colon x \text{ is a unit eigenvector of } \re(e^{-i\theta} A) \}$.

By a result of Rellich \cite{Rellich69} (see also \cite[Section 3.5.4, Corollary 2]{Baumgartel}), for each matrix $\re(e^{-i\theta}A)$, there is an orthonormal basis of eigenvectors $\{x_k(\theta)\}$, such that the eigenvectors are analytic functions of $\theta \in [0, 2\pi]$.  { The respective eigenvalues $\lambda_k(\theta)$ also depend on $\theta$ analytically; we will call them the {\em eigenfunctions} of $A$.} 

The boundary generating curve $C_A$ is the union of the analytic curves 
\begin{equation} \label{eq:eigcurve}
z_k(\theta) = {f_A(x_k(\theta)) =} e^{i\theta} (\lambda_k(\theta) + i \lambda_k'(\theta)),
\end{equation}
where $\lambda_k'$ is the derivative of $\lambda_k$ with respect to $\theta$.  Note that there is a permutation $\tau$ of $\{1,\ldots, n\}$ such that $\lambda_k(\theta + \pi) = -\lambda_{\tau(k)}(\theta)$.  Thus $z_k(\theta+\pi) = z_{\tau(k)}(\theta)$.  Therefore $C_A = \bigcup_k \{z_k(\theta) : \theta \in [0,\pi)\}$.

We are primarily concerned with eigenfunctions that correspond to points on the boundary.  The relationship between the eigenfunctions $\lambda_k(\theta)$ and the critical curves $z_k(\theta)$ given by \eqref{eq:eigcurve} helps make certain things clear.  For $\theta \in [0,2\pi)$, let $\ell_\theta$ denote the support line of $F(A)$ with the property that $e^{-i\theta} \ell_\theta$ is the vertical support line to the left of $e^{-i \theta} F(A)$.  Note that any $z_k(\theta)$ given by \eqref{eq:eigcurve} lies on $\ell_\theta$ if and only if $\lambda_k(\theta)$ is minimal.   Furthermore, $\ell_\theta$ contains a flat portion of the boundary when there is more than one minimal eigenfunction at $\theta$ and the minimal eigenfunctions split at the first power.   We will say that an eigenfunction $\lambda_k(\theta)$ \textit{corresponds to $z$ at} $\theta$ if $z_k(\theta) = z$.

Since the eigenfunctions $\lambda_k(\theta)$ are analytic, any two of them may coincide at only finitely many values of $\theta$ unless they are identical.  Moreover, for all but finitely many exceptional points, $\re(e^{-i\theta}A)$ must have exactly $m \le n$ distinct eigenvalues. Inverse continuity failures can only occur at exceptional points where two or more of the distinct eigenfunctions $\lambda_k(\theta)$ coincide.  Let $\theta_0$ denote such an exceptional point. As $\theta$ varies from $\theta_0$, the eigenfunctions will ``split".  Since the eigenfunctions are analytic, we may consider their Taylor series expansions about $\theta_0$.  We will say that a collection of eigenfunctions \textit{splits at power $k$} if the first difference in the coefficients of their Taylor series expansions occurs at a term with order $k$. With this terminology, we are ready to state our main theorem.

\begin{theorem} \label{thm:characterization}
Let $A \in M_n(\C)$ and $z \in \partial F(A) \cap \ell_{\theta_0}$.
\begin{enumerate}
\item $f_A^{-1}$ is strongly continuous at $z$ if and only if $z$ is in the relative interior of a flat portion of the boundary or the eigenfunctions corresponding to $z$ at $\theta_0$ do not split.  
\item $f_A^{-1}$ is weakly continuous at $z$ if and only if $z$ is in a flat portion of the boundary or there exists an eigenvalue function $\lambda_k(\theta)$ that is minimal (pointwise) in a neighborhood of $\theta_0$.
\end{enumerate}
\end{theorem}

The following remark shows that in order for weak continuity to fail, it is sufficient for the minimal eigenvalue functions to split at an odd power\footnote{An earlier version of Theorem \ref{thm:characterization} incorrectly asserted that weak continuity fails at a fully round boundary point if and only if the corresponding eigenvalue functions split at odd degree. See Examples \ref{ex:weakfail2} and \ref{ex:irred}. See also \cite{LLS2.5}.}  

\begin{remark} \label{rem:lexico}
  Each eigenvalue function $\lambda_k(\theta)$ has Taylor series expansion 
\begin{equation}\label{te} 
\lambda_k(\theta) = \lambda_k^{(0)}+\lambda_k^{(1)}(\theta-\theta_0)+\lambda_k^{(2)}(\theta-\theta_0)^2+\ldots. 
\end{equation} 
  For values of $\theta$ sufficiently close to $\theta_0$ with $\theta > \theta_0$, the minimal eigenfunction is the one with the minimal sequence of Taylor coefficients $(\lambda_k^{(0)},\lambda_k^{(1)},\lambda_k^{(2)},\ldots)$ in lexicographical ordering.  For values of $\theta$ sufficiently close to $\theta_0$ with $\theta < \theta_0$, the minimal eigenfunction has the minimal sequence of alternating Taylor coefficients $(\lambda_k^{(0)},-\lambda_k^{(1)},\lambda_k^{(2)},-\lambda_k^{(3)}, \ldots)$ in lexicographical ordering. In particular, if the minimal eigenfunction at $\theta_0$ splits at odd degree, then no one eigenfunction is minimal in a neighborhood of $\theta_0$. If the degree of the splitting is one, then there is a flat portion of the boundary corresponding to angle $\theta_0$. If the splitting is at an odd power greater than one, then weak continuity fails at the corresponding point $z \in \partial F(A)$.   
\end{remark}

\color{black}

\begin{remark}
Note that $\re(e^{-i \theta} A) =H\cos\theta  + K\sin\theta$, where $H = \re A$ and $K = \im A$.  To compute the series expansions for the eigenfunctions corresponding to $z \in \partial F(A) \cap \ell_\theta$, it is typically convenient to replace $A$ by $e^{-i \theta} A$.  Then we may assume that $z \in \ell_0$, and the Taylor series expansions for the eigenfunctions are centered at $0$.  Furthermore, since $H\cos\theta + K\sin\theta = (H + K\tan\theta)\cos\theta$, the eigenfunctions in question are all scalar multiples of the eigenfunctions of $H + t K$ where $t = \tan \theta$.   It is easily checked that if a collection of eigenfunctions of $H+tK$ split at power $k$, then the corresponding eigenfunctions of $H\cos\theta + K\sin\theta$ also split at power $k$.
\end{remark}

Since distinct eigenfunctions $\lambda_k(\theta)$ can only coincide at finitely many exceptional values of $\theta$, Theorem \ref{thm:characterization} implies the following.

\begin{corollary}
For $A \in M_n(\C)$, $f_A^{-1}$ is strongly continuous everywhere on $F(A)$, except possibly finitely many points on $\partial F(A)$.
\end{corollary}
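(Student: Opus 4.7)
The plan is to combine Theorem~\ref{thm:characterization}(1) with the finiteness of exceptional $\theta$-values already observed in the paragraph preceding the theorem. By Theorem~\ref{thm:characterization}(1), strong continuity of $f_A^{-1}$ can fail at $z\in\partial F(A)$ only when two conditions hold simultaneously: (a) $z$ is not in the relative interior of a flat portion of the boundary, and (b) the eigenfunctions corresponding to $z$ at $\theta_0$ split at $\theta_0$. Condition (b) in particular forces two distinct analytic eigenfunctions $\lambda_i(\theta)$ and $\lambda_j(\theta)$ of $\re(e^{-i\theta}A)$ to coincide at $\theta_0$.

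Next, I would argue that the set $E\subseteq[0,\pi)$ of parameters $\theta_0$ at which any two distinct eigenfunctions coincide is finite. If $\lambda_i\not\equiv \lambda_j$, then $\lambda_i-\lambda_j$ is a nonzero analytic function on the compact interval $[0,\pi]$ and therefore has only finitely many zeros; since there are at most $\binom{n}{2}$ such pairs, $E$ is finite.

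For each $\theta_0\in E$, convexity of $F(A)$ guarantees that $\ell_{\theta_0}\cap\partial F(A)$ is either a single point or a closed line segment (a flat portion). In either case, it contains at most two points satisfying condition~(a): the single point itself, or the two endpoints of the segment. Hence the set of points where strong continuity can fail is contained in a union of at most $2|E|$ points, and is in particular finite.

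I expect no real obstacle here, since the heavy lifting is done by Theorem~\ref{thm:characterization} and by Rellich's analyticity theorem (already invoked in the excerpt). The only small subtlety worth flagging in the write-up is that one must use the ``relative interior'' clause in Theorem~\ref{thm:characterization}(1) to discard the interior of each flat portion from consideration, so that only the two endpoints of that segment remain as potential failure points at the corresponding $\theta_0\in E$.
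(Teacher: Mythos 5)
Your argument is correct and follows essentially the same route the paper takes; the paper simply observes in one line that distinct analytic eigenfunctions coincide at only finitely many $\theta$ and cites Theorem~\ref{thm:characterization}, while you spell out the accounting (at most two non--relative-interior boundary points on each support line $\ell_{\theta_0}$, finitely many exceptional $\theta_0$). No gaps.
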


A result of von Nuemann and Wigner \cite{vNW29} known as the ``non-crossing rule'' asserts that {the set of matrices $A$ for which a Hermitian linear pencil $\{ \re A + t\im A\colon t\in\R\}$ contains no matrices with repeated eigenvalues is open and dense in $M_n(\C)$}; see  \cite{FrRoSy84} and \cite{JAG98}   for a deeper discussion and further use of this fact.  {From here the following proposition immediately follows.  }

\begin{proposition}
The set of matrices $A \in M_n(\C)$ for which $f_A^{-1}$ is strongly continuous everywhere on $F(A)$ is generic, that is, it contains an open, dense subset of $M_n(\C)$.   
\end{proposition}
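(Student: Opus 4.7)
The plan is to combine Theorem~\ref{thm:characterization}(1) with the non-crossing rule quoted just above. Writing $H=\re A$ and $K=\im A$, we have $\re(e^{-i\theta}A)=H\cos\theta+K\sin\theta$, with analytic eigenfunctions $\lambda_1(\theta),\dots,\lambda_n(\theta)$. By Theorem~\ref{thm:characterization}(1), $f_A^{-1}$ can fail to be strongly continuous at a point $z\in\partial F(A)\cap\ell_{\theta_0}$ only if the eigenfunctions corresponding to $z$ at $\theta_0$ split; in particular, at least two distinct eigenfunctions must coincide at $\theta_0$, forcing $\re(e^{-i\theta_0}A)$ to have a repeated eigenvalue. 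It therefore suffices to exhibit an open dense subset of $M_n(\C)$ on which $\re(e^{-i\theta}A)$ has $n$ distinct eigenvalues for every $\theta\in[0,2\pi)$.

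To produce such a set I would apply the non-crossing rule to the two Hermitian pencils $\{H+tK:t\in\R\}$ and $\{K+sH:s\in\R\}$. Since $H\cos\theta+K\sin\theta=\cos\theta\,(H+K\tan\theta)$ for $\theta\ne\pi/2$ and $H\cos\theta+K\sin\theta=\sin\theta\,(H\cot\theta+K)$ for $\theta\ne 0$, and a positive scalar factor preserves distinctness of eigenvalues, the first pencil parameterizes the eigenvalue pattern of $\re(e^{-i\theta}A)$ for $\theta\in(-\pi/2,\pi/2)$ and the second for $\theta\in(0,\pi)$; together they cover $[0,\pi)$, and the identity $\re(e^{-i(\theta+\pi)}A)=-\re(e^{-i\theta}A)$ extends this to $[0,2\pi)$. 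The non-crossing rule makes each of the two conditions hold on an open dense subset of $M_n(\C)$ (under the real-linear identification $A\leftrightarrow(H,K)$), so their intersection is again open and dense.

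On this intersection the analytic eigenfunctions $\lambda_k(\theta)$ are pairwise distinct for every $\theta$, so the splitting hypothesis of Theorem~\ref{thm:characterization}(1) is vacuous and strong continuity holds everywhere on $F(A)$, which proves the proposition. The only step that requires a little care is the handling of the two ``points at infinity'' $\theta=\pm\pi/2$ missed by a single real-parameter pencil; I would deal with this by using two overlapping pencils as above, though one could equivalently parameterize directions by $\R P^1$. Once this bookkeeping is made explicit, the argument reduces to a one-line deduction from the cited results.
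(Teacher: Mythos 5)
Your proof is correct, and it follows essentially the same route as the paper: the paper's own proof is just the single remark that the proposition ``immediately follows'' from Theorem~\ref{thm:characterization}(1) together with the von~Neumann--Wigner non-crossing rule. The one place where you go beyond the paper's terse statement is the handling of $\theta=\pm\pi/2$, and this is a legitimate point: as literally stated, the non-crossing rule controls only the affine pencil $\{H+tK : t\in\R\}$, which omits the direction $K$ itself, so a priori a repeated eigenvalue of $\im A$ could produce a splitting at $\theta_0=\pi/2$ even when the pencil is simple for every finite $t$. Your fix --- intersecting with the analogous open dense set for the second pencil $\{K+sH\}$ (equivalently, applying the non-crossing rule to $iA$), and then using $\re\bigl(e^{-i(\theta+\pi)}A\bigr)=-\re(e^{-i\theta}A)$ to cover the full circle of directions --- is clean and correct. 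An alternative, which is what the cited codimension-three formulation of von~Neumann--Wigner really gives, is to observe directly that for generic $(H,K)$ the compact one-parameter family $\{\cos\theta\,H+\sin\theta\,K : \theta\in[0,2\pi)\}$ misses the codimension-$3$ stratum of Hermitian matrices with a repeated eigenvalue; this treats all $\theta$ at once and avoids the two-pencil bookkeeping, but your version is equally valid.
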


Let $A \in M_n(\C)$ and suppose that $z \in \partial F(A)$ is multiply generated.  We will say that $z$ is an \textit{isolated} multiply generated boundary point if there is a neigborhood of $z$ containing no other multiply generated boundary points.  Note that when $A$ is unitarily irreducible and $n < 6$, all multiply generated, fully round, boundary points are isolated \cite{LLS12}.

\begin{theorem} \label{thm:multiGen}
    Let $A \in M_n(\C)$.  Strong continuity of $f_A^{-1}$ fails at all isolated, fully round,  multiply generated boundary points.
\end{theorem}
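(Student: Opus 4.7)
The proof proceeds by contradiction using Theorem~\ref{thm:characterization}(1). Assume $z \in \partial F(A) \cap \ell_{\theta_0}$ is isolated, fully round, and multiply generated, yet $f_A^{-1}$ is strongly continuous at $z$. Full roundness rules out $z$ lying on any flat portion of $\partial F(A)$: the intersection $F(A) \cap \ell_{\theta_0}$ is a convex subset of $\ell_{\theta_0}$, so if it were a nondegenerate segment then $z$ would sit on a line segment of $\partial F(A)$, contradicting full roundness. Hence $F(A) \cap \ell_{\theta_0} = \{z\}$, and part~(1) of Theorem~\ref{thm:characterization} forces the eigenfunctions corresponding to $z$ at $\theta_0$ not to split; being real-analytic and agreeing to all orders, they must coincide identically. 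Since every pre-image of $z$ lies in the minimal eigenspace of $\re(e^{-i\theta_0}A)$, and since $F(A)\cap\ell_{\theta_0} = \{z\}$ forces every minimal eigenfunction at $\theta_0$ to correspond to $z$ there, multiple generation yields $k\ge 2$ identical eigenfunctions $\lambda_{i_1}(\theta) \equiv \cdots \equiv \lambda_{i_k}(\theta) = \lambda_{\min}(\theta)$ in a neighborhood of $\theta_0$.

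Using Rellich's orthonormal analytic eigenvectors, set $V(\theta) := \span\{x_{i_1}(\theta),\ldots,x_{i_k}(\theta)\}$, a $k$-dimensional analytic family of subspaces of the $\lambda_{\min}(\theta)$-eigenspace. For every unit vector $x \in V(\theta)$,
\[
f_A(x) = e^{i\theta}\bigl(\lambda_{\min}(\theta) + i\,x^*\im(e^{-i\theta}A) x\bigr) \in \ell_\theta,
\]
so the image of the unit sphere of $V(\theta)$ lies on $\ell_\theta$ and equals the translated numerical range of the Hermitian compression $B(\theta) := P_{V(\theta)}\,\im(e^{-i\theta}A)\,P_{V(\theta)}$ acting on $V(\theta)$. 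Since $f_A$ sends every unit vector of $V(\theta_0)$ to $z$, we have $B(\theta_0) = \mu I$ where $\mu = \im(e^{-i\theta_0}z)$. Decomposing $B(\theta) = c(\theta) I + N(\theta)$ with $\tr N \equiv 0$, the analytic traceless part $N$ vanishes at $\theta_0$.

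Two cases arise. If $N \equiv 0$ on a neighborhood of $\theta_0$, then for each nearby $\theta$ the set $F(A) \cap \ell_\theta$ reduces to the single point $w(\theta) = e^{i\theta}(\lambda_{\min}(\theta) + ic(\theta))$, which is multiply generated by the entire unit sphere of $V(\theta)$; as $w(\theta) \to z$ when $\theta \to \theta_0$, this contradicts the isolation of $z$. Otherwise $N \not\equiv 0$, and by analyticity $N(\theta) \ne 0$ on a punctured neighborhood of $\theta_0$. For each such $\theta$ the numerical range of $B(\theta)$ is a nondegenerate segment, producing a nontrivial flat portion of $\partial F(A)$ on $\ell_\theta$. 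Distinct values of $\theta$ give distinct support lines and hence distinct flat portions, so we obtain infinitely many flat portions accumulating at $z$; but the algebraic curve $C_A$ has only finitely many bitangents, so $\partial F(A)$ admits only finitely many flat portions, a contradiction.

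I expect the main obstacle to lie in verifying that every pre-image of $z$ really arises from the identical family $\{\lambda_{i_j}\}$, i.e.\ in pinning down exactly which eigenfunctions correspond to $z$ at $\theta_0$ and that they exhaust the minimal eigenfunctions. Once full roundness is used to exclude rogue minimal eigenfunctions of $\re(e^{-i\theta_0}A)$ whose first derivative differs from $\mu$ (any such would place a distinct boundary point $z_k(\theta_0)$ on $\ell_{\theta_0}$ and force a flat portion through $z$), the compression $B(\theta)$ genuinely captures all relevant local behavior, and the analytic dichotomy above closes the argument.
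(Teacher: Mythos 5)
Your proof is correct and follows the same basic skeleton as the paper's: apply Theorem~\ref{thm:characterization}(1) to deduce that the eigenfunctions through $z$ cannot split, hence are identically equal, and then derive a contradiction with the isolation of $z$. The paper's version of the second step is considerably terser. It simply observes that once the eigenfunctions $\lambda_j$, $j\in J$, coincide identically, the curves $z_j(\theta)$ coincide as well, so each boundary point $z_j(\theta)$ for $\theta$ near $\theta_0$ has the orthonormal eigenvectors $x_j(\theta)$, $j\in J$, in its preimage and is therefore multiply generated; these points accumulate at $z$, contradicting isolation. (This works uniformly: even when the support line $\ell_\theta$ meets $F(A)$ in a nondegenerate segment, the point $z_j(\theta)=e^{i\theta}(\lambda_{\min}(\theta)+i\lambda_{\min}'(\theta))$ still sits in the interior of that segment, since $\lambda_{\min}'(\theta)=x_j(\theta)^*\im(e^{-i\theta}A)x_j(\theta)$ is a common diagonal entry, hence the average eigenvalue, of the compression $B(\theta)$, and that interior point is multiply generated.)

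Your argument introduces the compression $B(\theta)=P_{V(\theta)}\im(e^{-i\theta}A)P_{V(\theta)}$ explicitly, decomposes its traceless part, and treats the cases $N\equiv 0$ and $N\not\equiv 0$ separately, invoking the finiteness of flat portions of $\partial F(A)$ in the second case. This dichotomy is not strictly necessary because the single curve $z_j(\theta)$ already supplies the accumulating multiply generated points regardless of whether flat portions appear; but your version does make visible a genuine subtlety (the possible emergence of nearby flat portions) that the paper's one-line assertion ``all boundary points in that neighborhood are multiply generated'' glosses over, and it is somewhat more careful about pinning down exactly which eigenfunctions correspond to $z$ at $\theta_0$ (ruling out minimal eigenfunctions with a different first derivative via full roundness). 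In short: the approach is the same, your execution is more detailed, and the extra case split, while unnecessary, is sound.
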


\begin{proof}
Let $z$ be a multiply generated, fully round, boundary point. If the eigenfunctions of $\re(e^{-i \theta} A)$ corresponding to $z$ split, then by Theorem \ref{thm:characterization} strong continuity fails.  If the eigenfunctions do not split, then they are identical in a neighborhood of $z$, and that implies that all of the boundary points in that neighborhood are multiply generated, contradicting the assumption that $z$ is isolated.
\end{proof}

\section{Proof of Main Result}

In this section we present some geometric lemmas and finish with a proof of Theorem \ref{thm:characterization}.

\begin{lemma} \label{lemma:sphericalCap}
Let $S$ be a 2-sphere with radius $r>0$ in a real normed space $(X,||\cdot||)$ and let $x
\in S$.  For any $\epsilon \in (0, 2]$, let $C_\epsilon = \{y \in S: ||x-y|| \le
\epsilon r\}$.  If $T\colon X \rightarrow \R^2$ is a linear
transformation, then $T(C_\epsilon) = T(\conv C_\epsilon)$. In particular, $T(C_\epsilon)$ is convex.
\end{lemma}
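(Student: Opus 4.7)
Proof Proposal:

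The plan is to reduce to the Euclidean $\R^3$ setting, compute $\conv C_\epsilon$ explicitly, and then prove the nontrivial inclusion $T(\conv C_\epsilon)\subseteq T(C_\epsilon)$ via a case split on $\rank T$. Since $S$ is a round $2$-sphere, it lies in some $3$-dimensional Euclidean subspace of $X$. After translating the center to the origin, rescaling, and choosing orthonormal coordinates, we may assume $S$ is the unit sphere in $\R^3$ and $x=(0,0,1)$. The cap then becomes $C_\epsilon=\{y\in S : y_3\ge c\}$ with $c=1-\epsilon^2/2$ (using $\|y-x\|^2=2-2y_3$), and a routine slicing at each height shows $\conv C_\epsilon=\{y\in\R^3:\|y\|\le 1,\ y_3\ge c\}$, whose boundary consists of the spherical cap $C_\epsilon$ and the flat base $D=\{y_3=c,\ y_1^2+y_2^2\le 1-c^2\}$, with $\partial D\subset C_\epsilon$.

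The inclusion $T(C_\epsilon)\subseteq T(\conv C_\epsilon)$ is automatic, and $T(\conv C_\epsilon)$ is convex as a linear image of a convex set; so it suffices to establish the reverse inclusion, after which convexity of $T(C_\epsilon)$ follows for free. Fix $v\in T(\conv C_\epsilon)$. If $\rank T=0$, everything is trivial. If $\rank T=1$, write $Ty=\scal{w,y}e$ for a unit $e\in\R^2$; because each point of $C_\epsilon$ lies on $S$, it is extreme in $\bar B$ and hence in $\conv C_\epsilon$, so the extrema of $y\mapsto\scal{w,y}$ over $\conv C_\epsilon$ are attained on $C_\epsilon$. Connectedness of $C_\epsilon$ then forces $\{\scal{w,y}:y\in C_\epsilon\}=\{\scal{w,y}:y\in\conv C_\epsilon\}$, giving the claim.

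The substantive case is $\rank T=2$. Here $T^{-1}(v)$ is an affine line $L$, and $L\cap\conv C_\epsilon$ is a nonempty compact segment with endpoints $p,q$ on the boundary $C_\epsilon\cup D$. The strategy is to show at least one of $p,q$ actually lies on the spherical part $C_\epsilon$, which immediately gives $v=T(p)$ or $v=T(q)\in T(C_\epsilon)$. There are two subcases. If $L$ is contained in the plane $\{y_3=c\}$ of $D$, then $L\cap\conv C_\epsilon\subseteq D$ is a chord whose endpoints lie on $\partial D\subset C_\epsilon$. Otherwise $L$ meets the plane of $D$ in at most one point, so at most one of $p,q$ lies in $D$; the other, being on $\partial(\conv C_\epsilon)$ but strictly above the plane $\{y_3=c\}$, must lie on $C_\epsilon$.

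The main obstacle is precisely this last geometric dichotomy: showing that the segment $L\cap\conv C_\epsilon$ does not have both endpoints trapped in the relative interior of the flat base $D$. This is essentially the observation that a line not lying in an affine plane meets it in at most one point, together with the fact that $\conv C_\epsilon$ sits entirely on one side of the plane of $D$. Once this is in hand, the rank-$2$ case is done, and the reductions and rank-$0,1$ cases are routine.
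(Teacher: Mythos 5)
Your proof is correct and takes a genuinely different route from the paper's. The paper argues topologically: it observes that $\partial C_\epsilon=\{y\in S:\|x-y\|=\epsilon r\}$ is a planar circle, so $T(\partial C_\epsilon)$ is a segment or an ellipse; it then uses a null-homotopy argument (``$T(C_\epsilon)$ must be simply connected'') to conclude that the filled ellipse $T(\conv\partial C_\epsilon)$ lies inside $T(C_\epsilon)$, so $T(C_\epsilon)=T(C_\epsilon\cup\conv\partial C_\epsilon)$; and finally it invokes Jordan--Brouwer separation, noting that $C_\epsilon\cup\conv\partial C_\epsilon$ is a topological $2$-sphere whose union with its interior region is exactly $\conv C_\epsilon$, to obtain $T(\conv C_\epsilon)=T(C_\epsilon\cup\conv\partial C_\epsilon)=T(C_\epsilon)$. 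You instead normalize to the unit sphere in $\R^3$, compute $\conv C_\epsilon$ explicitly as a spherical cap glued to a flat disk $D$, and split on the rank of $T$ restricted to the affine span: rank $1$ by extreme points plus connectedness, rank $2$ by intersecting the fiber line $L=T^{-1}(v)$ with the body and locating an endpoint of $L\cap\conv C_\epsilon$ on the spherical part. Your route replaces topology with coordinate convex geometry and is more self-contained; the paper's is shorter but states the homotopy and separation steps quite tersely (in particular, the final equality $T(\conv C_\epsilon)=T(C_\epsilon\cup\conv\partial C_\epsilon)$ is really your fiber-line argument left implicit).

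One small loose end in your rank-$2$ transversal subcase: passing from ``at most one of $p,q$ lies in $D$'' to ``the other lies strictly above $\{y_3=c\}$'' tacitly assumes $p\neq q$. If $L\cap\conv C_\epsilon$ degenerated to a single point in the relative interior of $D$ the phrasing would not apply; but a line transversal to $\{y_3=c\}$ through a relative interior point of $D$ necessarily enters the open region $\{\|y\|<1,\ y_3>c\}$, so the intersection is a nondegenerate segment, and if instead the single point lies on $\partial D$ it is already in $C_\epsilon$. A one-sentence remark to this effect would close the gap.
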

\begin{proof}
Let $\partial C_\epsilon = \{y \in S : ||x-y|| = \epsilon r \}$.  Since
$\partial C_\epsilon$ is a circle, $T(\partial C_\epsilon)$ is
either a line segment or an ellipse.  { In the former case, it is clear that $T(\partial C_\epsilon) = T(\conv \partial C_\epsilon)$}. In the latter case, any points inside the ellipse
$T(\partial C_\epsilon)$ are contained in $T(C_\epsilon)$ because
$T(C_\epsilon)$ must be simply connected.  Thus $T(C_\epsilon) =
T(C_\epsilon \cup \conv \partial C_\epsilon)$.  The set $C_\epsilon
\cup \conv \partial C_\epsilon$ is homeomorphic to $S^2$, so it
separates the three dimensional affine span of $C_\epsilon$ into interior and exterior components.  The union of
$C_\epsilon \cup \conv \partial C_\epsilon$ with its interior
component is precisely $\conv C_\epsilon$.  It follows that $T(\conv
C_\epsilon) = T(C_\epsilon \cup \conv \partial C_\epsilon) =
T(C_\epsilon)$.
\end{proof}

The following lemma is a slight generalization of the famous Toeplitz-Hausdorff theorem and is inspired by the proof of that theorem found in \cite{Dav71}.
\begin{lemma} \label{lemma:convexBall}
For $A \in M_n(\C)$ and $x \in \CS^n$, let $B_r$ be a ball of radius $r > 0$ around $x$ in $\C^n$.  The image $f_A( \CS^n \cap B_r)$ is a convex set.
\end{lemma}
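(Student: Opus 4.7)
The plan is to adapt Davis's proof of the Toeplitz--Hausdorff theorem by restricting to a two-dimensional complex subspace containing two given preimages and parametrizing its unit sphere via Bloch coordinates, which will let us apply Lemma~\ref{lemma:sphericalCap} directly.  Given $x_1, x_2 \in \CS^n \cap B_r$ with $f_A(x_i) = p_i$, it suffices to show $[p_1, p_2] \subseteq f_A(\CS^n \cap B_r)$.  The case $x_2 \in \C x_1$ is trivial, so I assume $x_1, x_2$ are $\C$-linearly independent, set $V = \span_\C(x_1, x_2)$, and fix an orthonormal basis $\{e_1, e_2\}$ of $V$.  Every $v \in V \cap \CS^n$ then has the form $v = e^{i\psi} w(t, \phi)$ with $w(t, \phi) = \cos(t/2)\, e_1 + e^{i\phi} \sin(t/2)\, e_2$, $t \in [0, \pi]$, $\phi, \psi \in [0, 2\pi)$; the overall phase $\psi$ affects neither $f_A(v)$ nor $\abs{v^* x}$, and the pair $(t, \phi)$ corresponds to a point $(X, Y, Z) = (\sin t\cos\phi, \sin t\sin\phi, \cos t)$ of the unit $2$-sphere $S^2 \subset \R^3$.

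The technical heart is a pair of affine-linearization identities on this $2$-sphere.  Expanding $f_A(w(t, \phi))$ and using the half-angle identities $\cos^2(t/2) = (1 + \cos t)/2$, $\sin^2(t/2) = (1 - \cos t)/2$, and $2\sin(t/2)\cos(t/2) = \sin t$ gives
\[
  f_A(w(t, \phi)) = c_0 + T(X, Y, Z)
\]
for some $c_0 \in \C$ and some $\R$-linear $T \colon \R^3 \to \C \cong \R^2$, whose coefficients depend only on $f_A(e_1)$, $f_A(e_2)$, $e_1^* A e_2$, $e_2^* A e_1$.  The same manipulation applied to $\abs{w(t, \phi)^* x}^2 = \cos^2(t/2) \abs{a}^2 + \sin^2(t/2) \abs{b}^2 + 2\cos(t/2)\sin(t/2) \re(e^{i\phi} a \bar b)$, where $a = e_1^* x$ and $b = e_2^* x$, yields $\abs{w^* x}^2 = L(X, Y, Z)$ for a real affine-linear function $L$ of the Bloch coordinates.

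Minimizing $\norm{e^{i\psi} w - x}^2 = 2 - 2\re(e^{-i\psi} w^* x)$ over $\psi$ gives $2 - 2\abs{w^* x}$, so the condition that some phase of $w$ lies in $B_r$ is equivalent to $L(X, Y, Z) \ge (1 - r^2/2)^2$ when $r \le \sqrt{2}$, and is vacuous otherwise.  In either case the admissible Bloch points form a spherical cap $\A \subseteq S^2$ (possibly all of $S^2$), non-empty because $x_1, x_2 \in B_r$.  Lemma~\ref{lemma:sphericalCap} applied to $\A$ and the linear map $T$ then gives that $T(\A)$ is convex, so $f_A(V \cap \CS^n \cap B_r) = c_0 + T(\A)$ is convex and contains $\{p_1, p_2\}$, whence $[p_1, p_2] \subseteq f_A(V \cap \CS^n \cap B_r) \subseteq f_A(\CS^n \cap B_r)$.

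I expect the main obstacle to be the bookkeeping in the two affine-linearization identities: they are routine half-angle expansions but must be done carefully enough to read off the constant and linear coefficients correctly.  The conceptual content of the argument is simply that the Bloch parametrization simultaneously linearizes the two quadratic forms $v \mapsto f_A(v)$ and $v \mapsto \abs{\scal{v, x}}^2$ on $V \cap \CS^n$, reducing the question to a spherical-cap-and-linear-map problem that Lemma~\ref{lemma:sphericalCap} settles.
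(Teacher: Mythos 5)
Your proof is correct and takes essentially the same route as the paper's: both restrict to $V=\span\{x_1,x_2\}$, view the unit vectors of $V$ modulo phase as a $2$-sphere on which $f_A$ is affine-linear, identify the ball constraint $v\in B_r$ with a spherical cap on that sphere, and invoke Lemma~\ref{lemma:sphericalCap}. The only difference is presentational: you parametrize with explicit Bloch coordinates in $\R^3$, while the paper uses the map $g(y)=yy^*$ into Hermitian matrices with the Frobenius metric; your version makes the ``ball constraint cuts out a cap of the form required by Lemma~\ref{lemma:sphericalCap}'' step more explicit, which the paper asserts rather tersely.
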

\begin{proof}
For $y \in \C^n$, let $g(y) = yy^*$ { considered as the mapping} $g\colon \C^n \rightarrow \mathcal{S}_n$ { into the set $\S_n$} of $n$-by-$n$ Hermitian matrices with inner product $\langle X,Y \rangle = \tr(XY)$.  Let $\norm{X}_F = \scal{X,X}^{1/2}$ denote the Frobenius norm on $\S_n$.  Note that $\norm{xx^* - yy^*}^2_F = 2 - 2|x^*y|^2$, and therefore there is an $r' > 0$ such that every $y \in \CS^n \cap B_r$ satisfies $\norm{xx^* - yy^*}< r'$.

{ With this notation at hand,} the map { \eqref{fA} can be rewritten as} $f_A(x) = x^*Ax = \tr(Axx^*) = \hat{f}_A \circ g(x)$ where $\hat{f}_A:\S^n \rightarrow \C$ is the real linear map $\hat{f}_A(X) = \tr(AX)$.

Choose any $z_1, z_2 \in f_A(\CS^n \cap B_r)$.  There exist $y_1,y_2 \in \CS^n \cap B_r$ such that $f_A(y_1) = z_1$ and $f_A(y_2) = z_2$.  Let $V = \text{span}\, \{y_1, y_2 \}$. In \cite{Dav71}, it is observed that $g(\CS^n \cap V)$ is a 2-sphere in $\S_n$.  Let $C_\epsilon = g(\CS^n \cap B_r \cap V)$.  Note that $C_\epsilon$ is a spherical cap which contains $y_1y_1^*$ and $y_2y_2^*$.  Lemma \ref{lemma:sphericalCap} implies that $\hat{f}_A(C_\epsilon)$ is a convex set.  Both $z_1 = \hat{f}_A(y_1y_1^*)$ and $z_2 = \hat{f}_A(y_2y_2^*)$ are contained $\hat{f}_A(C_\epsilon)$, therefore the line segment connecting $z_1$ to $z_2$ is as well.  Since $\hat{f}_A(C_\epsilon) = f_A(\CS^n \cap B_r \cap V) \subset f_A(\CS^n \cap B_r)$, it follows that any pair of points in $f_A(\CS^n \cap B_r)$ is connected by a line segment in $f_A(\CS^n \cap B_r)$, and therefore $f_A(\CS^n \cap B_r)$ is convex.
\end{proof}

\begin{proof}[Proof of Theorem \ref{thm:characterization}.]
If $z \in F(A)$ is not a round boundary point, then the proof that $f_A^{-1}$ is strongly continuous at $z$ can be found in \cite[Theorem 4]{CJKLS2}. Therefore, assume that $z$ is a round boundary point and $z = z_k(\theta_0)$ where $z_k(\theta)$ is given by \eqref{eq:eigcurve} for some $k \in \{1,\ldots,n\}$. 
{Let $J$ denote the set of indices $j \in \{1,\ldots,n\}$ such that $z = z_j(\theta_0)$. }  We divide our proof into two cases.  \\

\noindent
\textit{Case I. Suppose the eigenfunctions $\{\lambda_j(\theta):j \in J\}$ do not split at $\theta_0$}. Fix $k \in J$.  Any $x \in f_A^{-1}(z)$ must be contained in the $\text{span}\, \{x_j(\theta_0) : j \in J\}$ where $x_j(\theta)$ denotes the analytic family of eigenvectors corresponding to $\lambda_j(\theta)$.  In particular there must be constants $\beta_j$ such that $x = \sum_{j \in J} \beta_j x_j(\theta_0)$ and $\sum_{j \in J} |\beta_j|^2 = 1$.  Then $x(\theta) = \sum_{j \in J} \beta_j x_j(\theta)$ satisfies $z_k(\theta) = f_A(x(\theta))$ for all $\theta$.  If $V$ is any open ball around $x$ in $\CS^n$, then $f_A(V)$ contains a relatively open neighborhood of $z$ along the curve $z_k(\theta)$. {If $z$ is fully round, then $z_k(\theta)$ completely parametrizes the boundary of $F(A)$ in a neighborhood of $z$}. By Lemma \ref{lemma:convexBall}, $f_A(V)$ must contain a neighborhood of $z$ in $F(A)$ which proves that $f_A^{-1}$ is strongly continuous at $z$.  If $z$ is an endpoint of a flat-portion of the boundary, then the curve $z_k(\theta)$ will only parametrize the curved portion of the boundary adjacent to $z$.  On the other side of $z$ will be a flat portion of $\partial F(A)$.  Any such flat portion will be a line segment connecting $z$ to some other $w \in \partial F(A)$.  If $y \in f_A^{-1}(w)$, then both $x$ and $y$ are eigenvectors of $\re(e^{-i\theta_0}A)$ corresponding to $\lambda_k(\theta_0)$. Let
$$y(t) = \frac{t y + (1-t)x}{||t y + (1-t)x||}, \text{ for } t \in [0,1].$$
The function $f_A(y(t))$ parametrizes the flat portion of the boundary between $w$ and $z$.  In particular, we may choose $\epsilon > 0$ small enough so that $y(\epsilon) \in V$.  Then $f_A(V)$ will contain all points on the flat portion of the boundary sufficiently close to $z$. Since $f_A(V)$ is convex and contains both $f_A(y(\epsilon))$ and the curved portion of the boundary adjacent to $z$, it follows that $f_A(V)$ contains a neighborhood of $z$ in $F(A)$, proving the strong continuity of $f_A^{-1}$ at $z$.  \\

\noindent
\textit{Case II. If the eigenfunctions $\{\lambda_j(\theta):j \in J \}$ split at $\theta_0$, then we will now demonstrate that strong continuity must fail}. Choose a neighborhood of $z$ in the boundary of $F(A)$.  Any such neighborhood will be a union of two one-sided neighborhoods on either side of $z$.  If $z$ is not fully round, one of the two neighborhoods will be a line segment in a flat portion of the boundary.  If $z$ is fully round, both of the one-sided neighborhoods adjacent to $z$ are curved portions of the boundary.  Assuming our neighborhood is small enough, any one-sided curved portion of the neighborhood is parametrized by at least one analytic curve $z_k(\theta)$ for $k \in J$.  Let $k$ be such an index, and consider a sequence $\theta_p \rightarrow \theta_0$ and a converging sequence $y_p \in \CS^n$ such that $f_A(y_p) = z_k(\theta_p) \in \partial F(A)$, for all $p \in \N$.  Let $J_k = \{j \in \{1,\ldots, n \} : z_j(\theta) = z_k(\theta) \text{ identically} \}$.  Since the sequence $\theta_p$ is infinite, we may assume that none of the $\theta_p$ are exceptional points.  Therefore $y_p \in \text{span}\, \{x_j(\theta_p) : j \in J_k \}$.  In particular, any accumulation point of $y_p$ is contained in $\text{span}\, \{x_j(\theta_0) : j \in J_k \}$.  Now consider any other $l \in \{1,\ldots,n\} \backslash J_k$ for which $z_l(\theta_0) = z$.  By assumption, such an index exists.  If we take a sufficiently small neighborhood around $x_l(\theta_0)$, that neighborhood will not intersect the subspace $\text{span}\, \{x_j(\theta_p) : j \in J_k \}$.  Therefore strong inverse continuity fails at $z$.  It is clear, however that if $z$ is not fully round, then any open ball $V$ around $x_k(\theta_0)$ will have $f_A(V)$ contain a neighborhood of $z$ by the same argument as in Case I.  In that case weak continuity holds for $f_A^{-1}$ at $z$. The same is true if there is one $z_k(\theta)$ that parametrizes both one sided neighborhoods of $z$ when $z$ is a fully round point.  

On the other hand, if $z$ is fully round and the minimal eigenfunctions $\lambda_k(\theta)$ and $\lambda_l(\theta)$ on either side of $\theta_0$ differ, then the two curved portions of the boundary on either side of $z$ are parametrized by distinct curves $z_k(\theta)$ and $z_l(\theta)$ corresponding to $\lambda_k(\theta)$ and $\lambda_l(\theta)$. In that case, the same argument given to demonstrate the failure of strong continuity at the beginning of Case II also establishes the failure of weak continuity at $f_A^{-1}$ at $z$.
\end{proof}
\color{black}

\section{Weak Continuity Failures}
Now we turn our attention to {further} developing criteria for the failure of weak continuity at one of the (finitely many) points at which it may fail. {Although Theorem \ref{thm:characterization} provides necessary and sufficient theoretical conditions for the failure of weak continuity on the boundary, in this section we offer more easily checked conditions.}  For $A \in M_3(\C)$, $f_A^{-1}$ is weakly continuous everywhere in $F(A)$ \cite[Theorem 10]{CJKLS2}. This is not necessarily the case in higher dimensions. We provide a complete description of weak continuity for unitarily irreducible 4-by-4 matrices and a sufficient condition for weak continuity to hold at any given point in $\partial F(A)$ which is easy to compute for all $A\in M_n(\mathbb{C})$.

As demonstrated by Theorem \ref{thm:characterization}, if $z \in \partial F(A)$ is not a fully round boundary point, then weak continuity holds at $z$.  If $z$ is a fully round boundary point, we may assume by translation and rotation that $z=0$, and the numerical range is contained in the closed right half-plane.  Let $H = \re A$ and $K = \im A$.  By unitary similarity, we may { further} assume that
\begin{equation} \label{necandsufconditions}
H=\begin{pmatrix}
0&0\\
0&H_1\\
\end{pmatrix},
\end{equation}
{ where $H_1\in M_k(\C)$ is positive definite (and thus invertible) for some $k\le n-2$.} 
{ The left upper block in the respective block representation of $K$ is also zero, since  $z=0$ is a fully round boundary point of $F(A)$. In other words, 
\begin{equation} \label{necandsufconditions1}
K=\begin{pmatrix}
0 & K_0\\
K_0^* & K_1\\
\end{pmatrix}.
\end{equation}
}
\begin{theorem}
\label{necandsuf}
Let $A\in M_n(\mathbb{C})$ with $H= \re A$ and $K = \im A$.  Suppose that $H$ and $K$ have block forms given by {\em \eqref{necandsufconditions},  \eqref{necandsufconditions1}}. Then a sufficient condition for weak continuity to hold at zero is that the Hermitian matrix $K_0 H_1^{-1} K_0^*$ has a simple largest eigenvalue. If $A$ is unitarily irreducible and $n=4$, this condition is also necessary.
\end{theorem}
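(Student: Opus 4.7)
The plan is to apply Theorem \ref{thm:characterization} and reduce the statement to a Taylor expansion of the eigenvalues of $H+tK$ near $t=0$ (via the substitution $t=\tan\theta$ described in the Remark following Theorem~\ref{thm:characterization}). Since $z=0$ is fully round, that theorem says weak continuity at $z$ is equivalent to the relevant eigenfunctions not splitting at an odd power. The eigenfunctions corresponding to $z=0$ at $\theta_{0}=0$ are exactly the $\lambda_{j}(\theta)$ with $\lambda_{j}(0)=0$: the block form of $H$ in \eqref{necandsufconditions} produces $n-k$ such branches, and the vanishing of the upper-left block of $K$ in \eqref{necandsufconditions1} forces $\lambda_{j}'(0)=0$ for each one, so all of them really do correspond to $z=0$.

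To compute the higher-order coefficients I would write the eigenvalue equation $(H+tK)\binom{u}{v}=\lambda\binom{u}{v}$ in block form, solve the second block as $v=-t(H_{1}+tK_{1}-\lambda I)^{-1}K_{0}^{*}u$, and substitute back into the first block to obtain the Schur-complement fixed-point equation
\begin{equation*}
\lambda\, u=-t^{2}K_{0}\bigl(H_{1}+tK_{1}-\lambda I\bigr)^{-1}K_{0}^{*}u.
\end{equation*}
A Neumann expansion of the resolvent combined with the ansatz $\lambda(t)=\lambda_{2}t^{2}+\lambda_{3}t^{3}+O(t^{4})$ then produces $\lambda\,u=-t^{2}Mu+t^{3}Nu+O(t^{4})$ with $M=K_{0}H_{1}^{-1}K_{0}^{*}$ and $N=K_{0}H_{1}^{-1}K_{1}H_{1}^{-1}K_{0}^{*}$. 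Matching coefficients shows that the second-order coefficients $\lambda_{2}$ of the $n-k$ branches are $-\mu$ as $\mu$ ranges over the eigenvalues of $M$, and, on any eigenspace of $M$ on which $\lambda_{2}$ is degenerate, the third-order coefficients $\lambda_{3}$ are the eigenvalues of the compression of $N$ to that eigenspace.

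For the sufficient direction, if the largest eigenvalue $\mu_{1}$ of $M$ is simple, then the second-order coefficients take at least two distinct values ($-\mu_{1}$ and $-\mu_{j}$ for any other eigenvalue $\mu_{j}$), so the collection of eigenfunctions splits at the even power $2$, and Theorem \ref{thm:characterization} yields weak continuity at $z=0$. For the necessary direction with $n=4$ and $A$ unitarily irreducible I would argue as follows. Here $k\le n-2=2$; the case $k=1$ forces $M$ to have rank at most one, whence its largest eigenvalue is automatically simple, so nothing needs to be shown. Assume therefore $k=2$ and that the largest eigenvalue of the $2\times 2$ matrix $M$ is not simple. Since $M\ge 0$, this forces $M=\mu I_{2}$ for some $\mu\ge 0$. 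The subcase $\mu=0$ gives $K_{0}=0$, making $A$ a direct sum in contradiction with irreducibility. For $\mu>0$, the equation $K_{0}H_{1}^{-1}K_{0}^{*}=\mu I_{2}$ forces $K_{0}=\sqrt{\mu}\,UH_{1}^{1/2}$ for some unitary $U\in M_{2}(\C)$, whence $N=\mu U\bigl(H_{1}^{-1/2}K_{1}H_{1}^{-1/2}\bigr)U^{*}$. If $N$ were also scalar then $K_{1}=cH_{1}$ for some $c\in\R$, and conjugating $A$ successively by $\diag(U,I_{2})$ and by $\diag(W,W)$ (with $W$ diagonalizing $H_{1}$), followed by a coordinate permutation, would exhibit $A$ as a direct sum of two $2\times 2$ blocks, again contradicting irreducibility. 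Therefore $N$ is not scalar, the third-order coefficients on the two-dimensional $\mu$-eigenspace of $M$ are distinct, splitting occurs at the odd power $3$, and Theorem \ref{thm:characterization} forces weak continuity to fail.

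The main technical obstacles I anticipate are (i) carrying out the perturbation expansion carefully enough to identify $\lambda_{3}$ as an eigenvalue of the compression of $N$ in the degenerate case, and (ii) verifying the explicit unitary equivalence that exhibits $A$ as a direct sum in the borderline case $M=\mu I_{2}$, $K_{1}=cH_{1}$; everything else should be routine block-matrix algebra.
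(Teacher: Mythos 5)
Your proposal is correct and reaches the same expansion coefficients $M=K_0H_1^{-1}K_0^*$ (second order) and $N=K_0H_1^{-1}K_1H_1^{-1}K_0^*$ (third order, compressed to the degenerate eigenspace of $M$) as the paper, but by a genuinely different computation. The paper invokes Kato's reduction process for analytic perturbations (the operators $\widetilde{T}^{(1)}$, $\widetilde{T}^{(2)}$, $\widetilde{\widetilde{T}}^{(1)}$ and the spectral projections $P$, $S$), whereas you derive the same data directly from the Schur-complement fixed-point equation $\lambda u=-t^2K_0(H_1+tK_1-\lambda I)^{-1}K_0^*u$ together with a Neumann expansion and the ansatz $\lambda=O(t^2)$. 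Your route is more elementary and self-contained, and it naturally yields the correct general statement that the third-order coefficients are eigenvalues of the \emph{compression} of $N$ to the degenerate eigenspace (the paper only needs the case where that eigenspace is the whole $(n-k)$-block). For the necessity when $n=4$, both proofs reduce the non-simple case to $M=\mu I_2$, $N$ scalar, and hence $K_1$ proportional to $H_1$; the paper then asserts reducibility while you exhibit the explicit unitary $\diag(U^*,I)$, $\diag(W,W)$, and permutation that displays $A$ as a direct sum of two $2\times2$ blocks — a welcome addition. Two small points worth tightening: in the $k=1$ case you should note that $K_0\neq 0$ (otherwise $A$ is reducible, since $\ker H$ would be a joint invariant subspace) so that $M$ has rank exactly one rather than ``at most one,'' which is what forces the largest eigenvalue to be simple; and your sufficiency sentence ``splits at the even power $2$, hence weak continuity'' is really using that the unique minimal second-order coefficient singles out one analytic branch tracing the boundary on both sides of $z$ — the literal statement of Theorem~\ref{thm:characterization} is a shorthand for this recursive condition on the \emph{minimal} eigenfunctions, and your argument silently (and correctly) uses that reading, much as the paper does.
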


Before proving the theorem, let us review some relevant facts concerning analytic perturbations of Hermitian matrices.  Consider an analytic perturbation $T(x)$ of a matrix $T^{(0)} = T(0) \in M_n(\C)$ such that $T(x)$ is Hermitian when $x \in \R$.  Such a perturbation has a power series representation,
$$
T(x) = T^{(0)} + x T^{(1)} + x^2 T^{(2)} + \ldots .
$$
Let $\lambda$ be an eigenvalue of $T^{(0)}$ with multiplicity $m$.  We know that the eigenvalue $\lambda$ may split into several eigenvalues of $T(x)$ as $x$ moves away from $x=0$.  Let us denote these eigenvalues $\{ \lambda_j(x) \}_{j = 1}^m$.  Following \cite{Kato}, we call this set the \textit{$\lambda$-group}. Since $T(x)$ is Hermitian for real $x$, each eigenvalue is an analytic function of $x$ in a neighborhood of $0$ \cite[VII-\textsection 3.1]{Kato}.  Thus each $\lambda_j(x)$ has a Taylor series of the form
$$\lambda_j(x) = \lambda + \lambda_j^{(1)}x + \lambda_j^{(2)}x^2 + \ldots.$$
A reduction formula for recursively computing the coefficients of the series above can be found in \cite[II-\textsection 2.3]{Kato}.

Let $\Gamma$ be a positively oriented circle enclosing $\lambda$ but no other eigenvalues of $T^{(0)}$. The matrix
$$P(x) = - \frac{1}{2\pi i} \int_\Gamma (T(x) - \xi I)^{-1} \, d \xi$$
is an orthogonal projection equal to the sum of all eigenprojections of $T(x)$ corresponding to eigenvalues of $T(x)$ lying inside $\Gamma$. Thus the range of $P(x)$ is an invariant subspace of $T(x)$.  Following \cite{Kato} we define
$$
\widetilde{T}^{(1)}(x) = \frac{1}{x} (T(x) - \lambda I) P(x).
$$
The reduction comes from noting that the eigenvalues of $T(x)$ in the $\lambda$-group each have the form $\lambda_j(x) = \lambda + x \lambda_j^{(1)}(x)$ where $\lambda_j^{(1)}(x)$ is an eigenvalue of $\widetilde{T}^{(1)}(x)$ with eigenvector in the range of $P(x)$.  Furthermore, $\widetilde{T}^{(1)}(x)$ is itself analytic in a neighborhood of $x=0$, so it has a power series expansion:
$$
\widetilde{T}^{(1)}(x) = \widetilde{T}^{(1)} + \widetilde{T}^{(2)}x + \widetilde{T}^{(3)}x^2 + \ldots .
$$
Formulas for computing each $\widetilde{T}^{(j)}$ can be found in \cite[p. 78]{Kato}.

\begin{proof}[Proof of Theorem \ref{necandsuf}.]
Consider $T(x) = H + xK$.  In the notation above, $T^{(0)} = H$, $T^{(1)} = K$, and $T^{(j)} = 0$ for all $j \geq 2$.  Using the formulas from \cite[p. 78]{Kato}, we obtain
$\widetilde{T}^{(1)} = P K P$ and $\widetilde{T}^{(2)} = -PKSKP$ where $P = \begin{pmatrix} I_{n-k} & 0 \\ 0 & 0 \end{pmatrix}$ is the spectral projection corresponding to $\lambda = 0$ and $S= \begin{pmatrix} 0 & 0 \\ 0 & H_1^{-1} \end{pmatrix}$ is the Moore-Penrose pseudo-inverse of $H$.  Note that $T^{(1)} = PKP = 0$, and $T^{(2)} = -PKSKP = \begin{pmatrix} - K_0 H_1^{-1} K_0^* & 0 \\ 0 & 0 \end{pmatrix}$.  Since $\widetilde{T}^{(1)} = 0$, the expression for $\widetilde{T}^{(1)}(x)$ reduces to
$$
\widetilde{T}^{(1)}(x) = x(\widetilde{T}^{(2)} + \widetilde{T}^{(3)}x + \widetilde{T}^{(4)}x^2 + \ldots).
$$
Thus, the problem of finding a power series expression for the eigenvalues in the $\lambda$-group of $T(x)$ reduces to finding a power series expression for the eigenvalues of
$$
\widetilde{T}^{(2)}(x) = \widetilde{T}^{(2)} + \widetilde{T}^{(3)}x + \widetilde{T}^{(4)}x^2 + \ldots
$$
The second order coefficients for the expansions of the eigenvalues of $T(x)$ in the $\lambda$-group are the eigenvalues of $\tilde{T}^{(2)}$ which correspond to eigenvectors in the range of $P(0) = P$.  Those are the eigenvalues of the block $-K_0 H_1^{-1} K_0^*$.  Let $\lambda^{(2)}_j$ denote these eigenvalues.  Then the power series expansions for the eigenvalues of $T(x)$ in the $\lambda$-group have the form
$$\lambda(x) = \lambda + \lambda_j^{(2)}x^2 + O(x^3).$$
If the minimum eigenvalue of $-K_0 H_1^{-1} K_0^*$ is repeated, then we repeat the reduction process.  In this case, we need to find the spectral projection $\widetilde{P}^{(2)}$ corresponding to the minimal eigenvalue of $\widetilde{T}^{(2)}$.  

In the case when $n=4$, { we must have $k=2$ in order for the} block $-K_0 H_1^{-1} K_0^*$ { to have} a repeated eigenvalue. { Thus,} $\widetilde{P}^{(2)} = P = \diag(1,1,0,0)$.  For now, let us assume this to be the case.

Let $\lambda^{(2)}$ denote the repeated eigenvalue of the block $-K_0 H_1^{-1} K_0^*$. By the reduction process in \cite[II-\textsection 2.3]{Kato}, the eigenvalues of $\widetilde{T}^{(2)}(x)$ in the $\lambda^{(2)}$-group have the form
$$\lambda^{(2)} + x \lambda_j^{(3)} + O(x^2)$$
where $\lambda_j^{(3)}$ are the eigenvalues of the matrix
$$\widetilde{\widetilde{T}}^{(1)} = P \widetilde{T}^{(3)} P = \begin{pmatrix} K_0 H_1^{-1} K_1 H_1^{-1} K_0^* & 0 \\ 0 & 0  \end{pmatrix}$$
corresponding to the block $K_0 H_1^{-1} K_1 H_1^{-1} K_0^*$. In this case the $\lambda$-group eigenvalues have the form
$$\lambda(x) = \lambda + \lambda^{(2)} x^2 + \lambda_j^{(3)} x^3 + O(x^4).$$
If the third order coefficients of the two eigenvalues in the $\lambda$-group differ, then weak inverse continuity fails at $0$ by Theorem \ref{thm:characterization} and the remarks following the theorem.  If the two eigenvalues are the same, then the self-adjoint block $K_0 H_1^{-1} K_1 H_1^{-1} K_0^*$ is a multiple of the identity.  Since we have previously assumed that the eigenvalues of $K_0 H_1^{-1} K_0^*$ are repeated, that matrix must also be a multiple of the identity.  From this it follows that both $H_1$ and $K_1$ are scalar multiples of $K_0^*K_0$.  This implies that the matrix $A = H+iK$ is unitarily reducible, contradicting the hypothesis.  Therefore the condition of Theorem \ref{necandsuf} is both necessary and sufficient for weak continuity to fail when $n=4$.
\end{proof}

\section{Examples} 

\begin{example}\label{example12}
{ Following} Example 12 from \cite{CJKLS2}, { let us consider} 
\[A=\begin{pmatrix}
0&0&ik_1&0\\
0&0&0&ik_2\\
ik_1&0&1&ir\\
0&ik_2&ir&1\\
\end{pmatrix}
\]
for $k_1,k_2,r>0$, $k_1>k_2$. { Then} 
\[ { \re A} =\on{diag}(0,0,1,1)\text{ and } { \im A } =\begin{pmatrix}
0&0&k_1&0\\
0&0&0&k_2\\
k_1&0&0&r\\
0&k_2&r&0\\
\end{pmatrix}. \]
It is shown in \cite{CJKLS2} that $A$ is unitarily irreducible, and $0\in F(A)$ is the unique multiply generated boundary round point. Computing $K_0H_1^{-1}K_0^*=\on{diag}[k_1^2,k_2^2]$ gives a simple largest eigenvalue, so Theorem \ref{necandsuf} shows that weak continuity holds at the origin whereas strong continuity fails by Theorem \ref{thm:multiGen}.

{ The failure of strong continuity in this setting was established in \cite{CJKLS2} via a case specific parametrization of $\partial F(A)$; the persistence of weak continuity was also mentioned there in passing.} 
\begin{figure}[h]
\includegraphics[scale=.55]{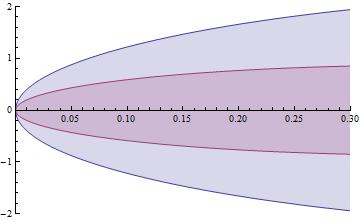}
\caption{$F(A)$ when $k_1=2$ and $k_2=1=r$. The curves $z_k(\theta)$ are illustrated for the two largest eigenvalues of $\on{Re}(e^{-i\theta}A)$, which coincide at $\theta=\pi$ but split into two distinct branches away from $\pi$.}
\end{figure}
\end{example}

\begin{example} \label{ex:weakfail} Weak continuity can fail for a unitarily irreducible 4-by-4 matrix. 

Let $H=\on{diag}(0,0,1,4)$ and $K=\begin{pmatrix}
0&0&1&0\\
0&0&0&2\\
1&0&1&2\\
0&2&2&3\\
\end{pmatrix}$. Obviously, $e_3$ is a simple eigenvector for $H$, and the set $\{e_3,Ke_3,K^2e_3,K^3e_3\}$ is linearly independent, establishing the unitary irreducibility of $A =H+iK$. Also, $H_1=K_0^*K_0$, so by Theorem \ref{necandsuf}, weak continuity fails. See Figure \ref{fig:critCurves} below.

\begin{figure}[h]
\begin{tabular}{cc}
\includegraphics[scale=0.3]{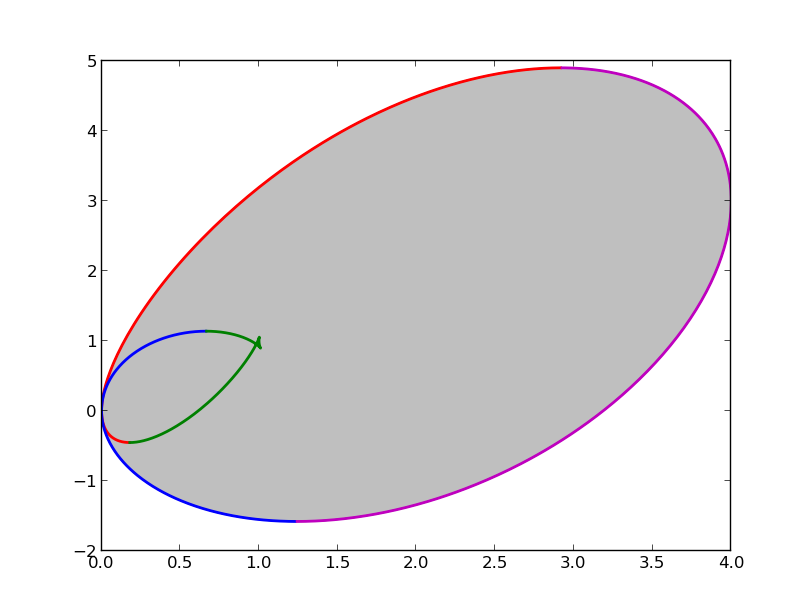} & \includegraphics[scale=0.3]{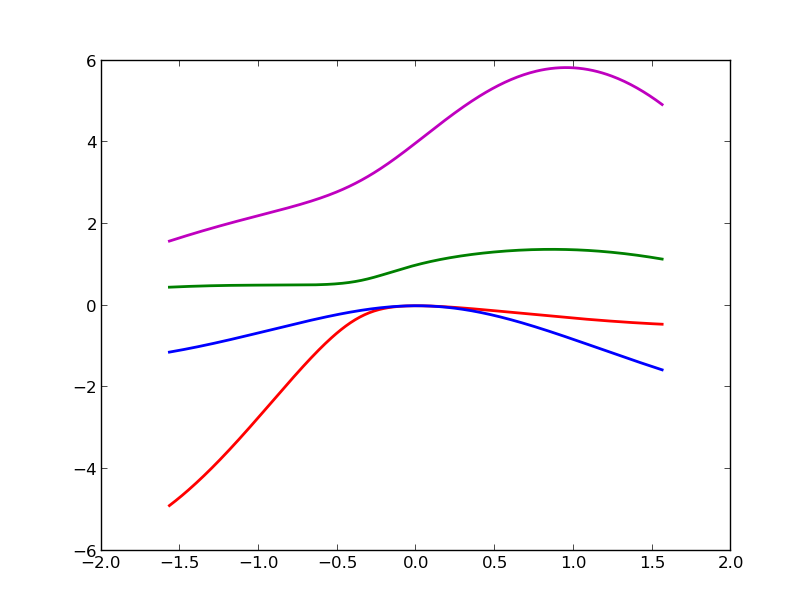}
\end{tabular}
\caption[]{The numerical range of $A$ from Example \ref{ex:weakfail} including the critical curves $z_k(\theta)$ (left) and the corresponding critical values $\lambda_k(\theta)$ (right) for $\theta \in [-\pi/2,\pi/2)$.
} \label{fig:critCurves}
\end{figure}
\end{example}


\begin{example}\label{6x6}
Let us now revisit \cite[Example 6.1]{LLS12}: 
\eq{sixeq}
 A=\begin{pmatrix}
 0&x&0&cy&0&0\\
 0&0&y&0&0&0\\
 0&0&0&0&0&0\\
 0&0&-cx&0&\sqrt{1-c^2}\xi&0\\
 0&0&0&0&0&\eta\\
 0&0&0&0&0&0\\
 \end{pmatrix}, \en
where $w,y,\xi,\eta,c>0, \;w^2+y^2=\xi^2+\eta^2=4,\;c<1$.

According to \cite{LSS},  $F(A)$ is the unit disk and $A$ is unitarily irreducible. Also, the eigenvalues of $\re (e^{-i\theta}A)$ do not depend on $\theta$ and equal $\pm 1$ (each having multiplicity 2) and $\pm c\eta/2$. So, condition (1) of Theorem~\ref{thm:characterization} holds. Thus, $f_A^{-1}$ is strongly continuous everywhere on $F(A)$ in spite of the fact that each point of $\partial F(A)$ is round and multiply generated.
\end{example}

\begin{example} \label{ex:weakfail2}
The following 5-by-5 example illustrates that weak continuity can fail even when the eigenfunctions corresponding to a fully round boundary point split at an even degree. It is based on Example \ref{ex:weakfail} with the addition of a normal eigenvalue at 0.  Let $A, H$, and $K$ be as in Example \ref{ex:weakfail}. The eigenvalue $\lambda = 0$ of $A(t) = H+tK$ has multiplicity 2 at $t=0$. Following the proof of Theorem \ref{necandsuf}, the $\lambda$-group eigenvalues of $A(t)$ have the following Taylor series expansions:
$$ \lambda_1(t) = -t^2 + (\tfrac{7+\sqrt{65}}{8})t^3 + O(t^4),$$
$$ \lambda_2(t) = -t^2 + (\tfrac{7-\sqrt{65}}{8})t^3 + O(t^4).$$
These two eigenfunctions split at degree 3.  
Now, consider the 5-by-5 matrix $B = A \oplus (0)$. The numerical range of $B$ is the same as that of $A$. The pencil $B(t) = (H\oplus(0)) + t (K\oplus (0))$ adds an additional lambda group eigenvalue:
$$ \lambda_3(t) = 0.$$
This $\lambda$-group splits at degree 2, but weak continuity still fails at the point $z = 0$ in the boundary of the numerical range of $B$ since no one eigenfunction is minimal in a neighborhood of $t = 0$.  

\begin{figure}[h]
\begin{center}
\begin{tabular}{cc}
\includegraphics[scale=0.3]{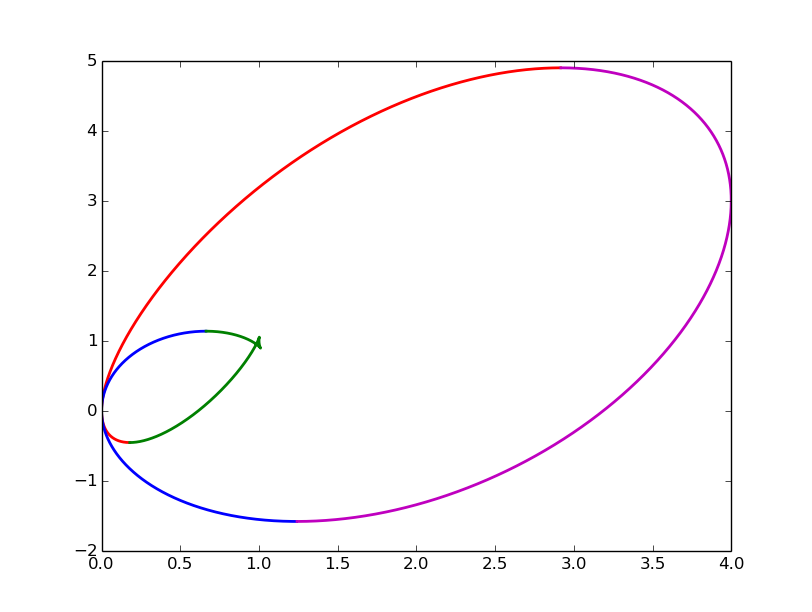} & \includegraphics[scale=0.3]{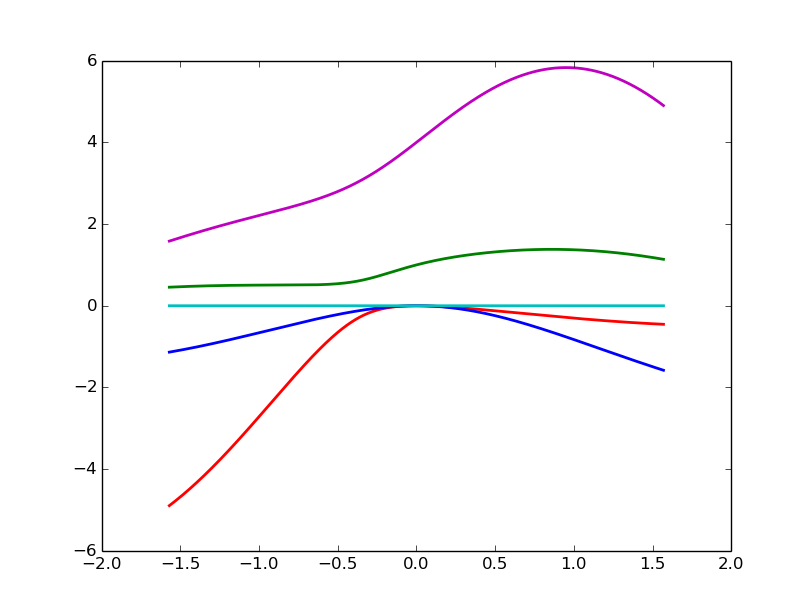}
\end{tabular}
\caption[]{The critical curves $z_k(\theta)$ (left) and critical values $\lambda_k(\theta)$ (right) with $\theta \in [-\pi/2,\pi/2)$ for the matrix $B$ in Example \ref{ex:weakfail2}. Note that the $\lambda = 0$ group eigenvalues split at degree 2 when $\theta=0$, but weak continuity of $f_B^{-1}$ fails at the corresponding point $z=0$ on the boundary of $F(B)$ since no one eigenfunction is minimal in a neighborhood of $\theta=0$.  
} \label{fig:critCurves2}

\end{center}
\end{figure}
\end{example}

In order for weak continuity to fail at a point $z$ where the corresponding eigenvalue functions split at an even degree, as in the example above, there must be at least three distinct eigenvalue functions that correspond to $z$.  If this occurs for a 4-by-4 matrix, the condition of Theorem \ref{necandsuf} applies and we see that weak continuity must hold. Therefore the original version of Theorem \ref{thm:characterization}(2) is valid when $n \le 4$.  

\begin{example} \label{ex:irred}
The following 6-by-6 matrix is unitarily irreducible and the eigenfunctions corresponding to $z=0$ split at degree two, but $f_A^{-1}$ is not weakly continuous there.   
$$A=\begin{pmatrix}
0 & 0 & 0 & i & 0 & 0 \\
0 & 0 & 0 & 0 & 2i & 0 \\
0 & 0 & 0 & 0 & 0 & 2i \\
i & 0 & 0 & 1 + 1.3i & 1.1i & 0.3i \\
0 & 2i & 0 & 1.1i & 1 + 1.2i & 1.7i \\
0 & 0 & 2i & 0.3i & 1.7i & 1 + 1.7i
\end{pmatrix}$$
If $H = \re(A)$ and $K = \im(A)$, then the Taylor series expansions for the 0-group eigenvalues of $H+tK$ about $t=0$ are:
\begin{align*}
\lambda_1(t)&= -4t^2 - 1.6689t^3 + O(t^4), \\
\lambda_2(t)&= -4t^2 + 1.5862t^3 + O(t^4), \text{ and}  \\
\lambda_3(t)&= -t^2 + 12.9826t^3 + O(t^4). \\
\end{align*}
\begin{figure}[h]
\begin{center}
\begin{tabular}{cc}
\includegraphics[scale=0.3]{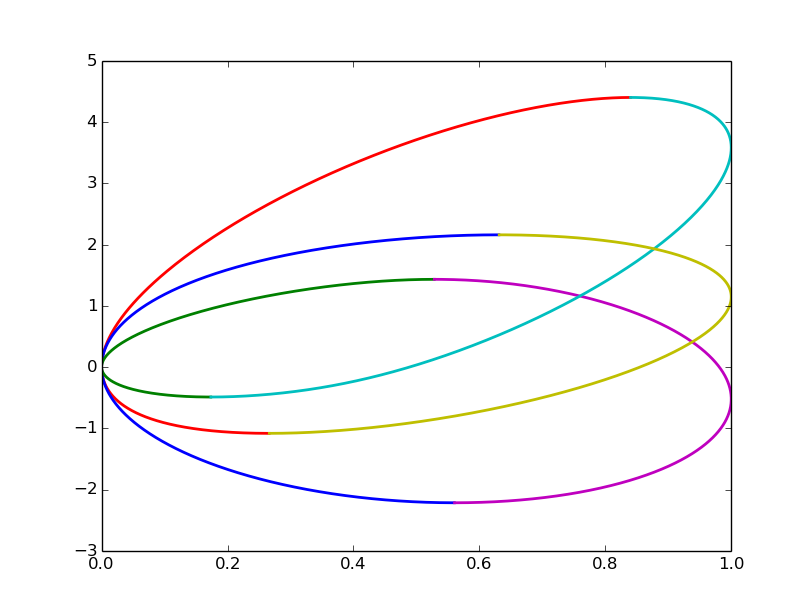} & \includegraphics[scale=0.3]{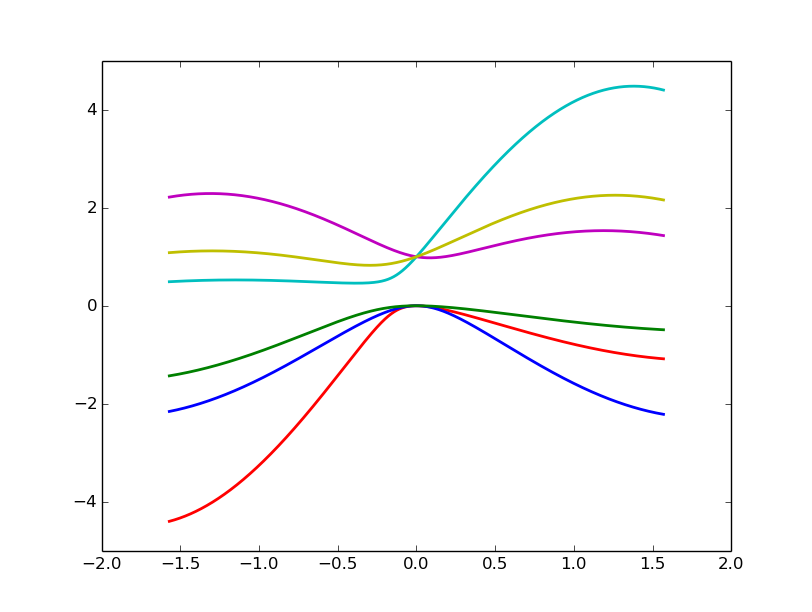}
\end{tabular}
\caption[]{The critical curves and values of the matrix $A$ in Example \ref{ex:irred}.} \label{fig:irred}
\end{center}
\end{figure}
\end{example}

\begin{ack}
We are very grateful to Stephan Weis for pointing out the deficiency in the original
statement of Theorem 2.1 and for follow up discussions since then.  
\end{ack}
\color{black}
\newpage

\bibliography{master}
\bibliographystyle{plain}

\end{document}